\definecolor{webgreen}{rgb}{0,.5,0}
\definecolor{webbrown}{rgb}{.6,0,0}
\def\suchthat{\, : \,}
\def\Enn{\mathbb{N}}
\newcommand{\seqnum}[1]{\href{https://oeis.org/#1}{\rm \underline{#1}}}
\begin{document}

\theoremstyle{plain}
\newtheorem{theorem}{Theorem}
\newtheorem{corollary}[theorem]{Corollary}
\newtheorem{lemma}[theorem]{Lemma}
\newtheorem{proposition}[theorem]{Proposition}

\theoremstyle{definition}
\newtheorem{definition}[theorem]{Definition}
\newtheorem{example}[theorem]{Example}
\newtheorem{conjecture}[theorem]{Conjecture}

\theoremstyle{remark}
\newtheorem{remark}[theorem]{Remark}

\title{Some Tribonacci Conjectures}

\author{Jeffrey Shallit\\
School of Computer Science \\
University of Waterloo \\
Waterloo, ON  N2L 3G1 \\
Canada\\
\href{mailto:shallit@uwaterloo.ca}{\tt shallit@uwaterloo.ca}}

\maketitle

\begin{abstract}
In a recent talk of Robbert Fokkink, some conjectures related
to the infinite Tribonacci word were stated by the speaker and the audience.
In this note we show how to prove (or disprove) the claims
easily in a ``purely mechanical'' fashion, using the
{\tt Walnut} theorem-prover.   
\end{abstract}

\section{Introduction}

The {\it Tribonacci sequence} $(T_n)_{n \geq 0}$ satisfies the
three-term recurrence relation
$$ T_n = T_{n-1} + T_{n-2} + T_{n-3}$$
for $n \geq 3$, with initial values
$T_0 = 0$, $T_1 = 1$, $T_2 = 1$; see
\cite{Feinberg:1963,Scott&Delaney&Hoggatt:1977}.
It is sequence \seqnum{A000073} in the 
{\it On-Line Encyclopedia of Integer Sequences} (OEIS); see
\cite{Sloane:2022}.  The first few values are as follows:
\begin{table}[H]
\begin{center}
\begin{tabular}{c|ccccccccccccccc}
$n$ &  1& 2& 3& 4& 5& 6& 7& 8& 9&10&11&12&13&14&15\\
\hline
$T_n$ & 1 & 1&   2&   4&   7&  13&  24&  44&  81& 149& 274& 504& 927&1705&3136\\
\end{tabular}
\end{center}
\caption{First few values of the Tribonacci sequence.}
\end{table}

As is well-known, every non-negative integer $N$ has a
unique representation in the form
$$N = \sum_{2 \leq i \leq t} e_i T_i,$$
where $e_t = 1$, $e_i \in \{0,1\}$, and $e_i e_{i+1} e_{i+2} \not= 1$ for
$i \geq 2$ \cite{Carlitz&Scoville&Hoggatt:1972d}.
This representation is conventionally written as a
binary word (or string) $(N)_T$ with the most significant digit first:
$e_t e_{t-1} \cdots e_2$.   For example, $(43)_T = 110110$.
There is a related function
$[x]_T = N$, if $x = x_1 \cdots x_t \in \{0,1\}^*$ and
$N = \sum_{1 \leq i \leq t} x_i T_{t+3-i}$.

Intimately connected with the Tribonacci sequence is the 
{\it infinite Tribonacci word} 
$$ {\bf TR}  = t_0 t_1 t_2 \cdots = 0102010 \cdots ,$$
where 
$t_N$ is defined to be the number of trailing $1$'s in
$(N)_T$.  This is a famous infinite word that has been
extensively studied (e.g.,
\cite{Barcucci&Belanger&Brlek:2004,Tan&Wen:2007,Richomme&Saari&Zamboni:2010,Huang&Wen:2017,Lejeune&Rigo&Rosenfeld:2020}).  It is sequence
\seqnum{A080843} in the OEIS.

The Tribonacci sequence and its relatives
are intimately connected with a number
of combinatorial games \cite{Duchene&Rigo:2008}.
In this note I will
explain how one can rigorously prove properties of these sequences
``purely mechanically'', using various computational techniques involving
formal languages and automata.
As an example of the method, we show how to prove (or disprove!) some
recent conjectures made in a talk by Robbert Fokkink.
One of these conjectures also appears just before Section 2 in
\cite{Fokkink&Rust:2022}.

We use the {\tt Walnut} theorem-prover
originally designed by Hamoon Mousavi \cite{Mousavi:2016}.
This free software is able to rigorously prove or disprove first-order
statements involving automatic and synchronized sequences.
For previous work with {\tt Walnut} and the infinite Tribonacci
word, see \cite{Mousavi&Shallit:2015}.

\section{Automata}
\label{aut-sec}

Our methods involve two classes of infinite words, 
called Tribonacci-automatic and Tribonacci-synchronized.
For more about them, see \cite{Mousavi&Shallit:2015,Shallit:2022}.

An infinite word ${\bf x}$ is said to be {\it Tribonacci-automatic\/} if
there is a deterministic finite automaton with output (DFAO) that,
on input $[N]_T$, the Tribonacci representation of $N$, reaches
a state with output ${\bf x}[N]$.  For more about automatic sequences,
see \cite{Allouche&Shallit:2003}.

An infinite sequence of integers $(y_n)_{n \geq 0}$ is said to
be {\it Tribonacci-synchronized\/} if there is a deterministic finite
automaton (DFA) that, on input the Tribonacci representations of
$N$ and $x$ in parallel (the shorter one, if necessary, padded
with leading zeros), accepts if and only if $x = y_N$.
For more about synchronized sequences, see
\cite{Shallit:2021h}.

The Tribonacci word $\bf TR$ is Tribonacci-automatic, and is
generated by the automaton in Figure~\ref{fig1}, where the output of the
state numbered $i$ is $i$ itself.
\begin{figure}[htb]
\begin{center}
\includegraphics[width=4in]{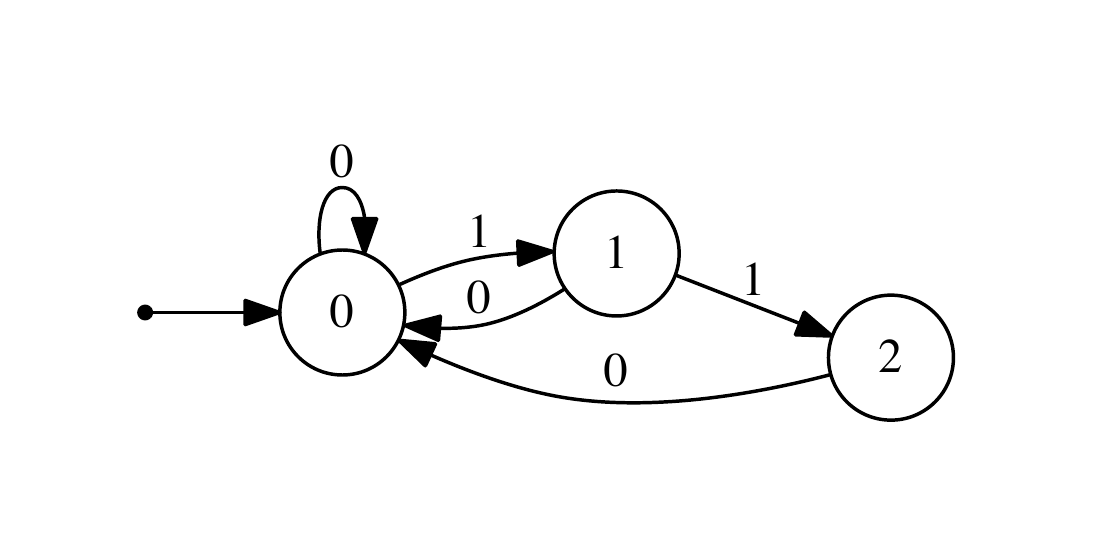}
\end{center}
\caption{Tribonacci automaton computing $\bf TR$.}
\label{fig1}
\end{figure}

Three examples of Tribonacci-synchronized sequences are the
sequences $(D_n)_{n \geq 0}$, $(E_n)_{n \geq 0}$,
$(F_n)_{n \geq 0}$ defined as follows:
\begin{align*}
D_n &= |{\bf TR}[0..n-1]|_0 \\
E_n &= |{\bf TR}[0..n-1]|_1 \\
F_n &= |{\bf TR}[0..n-1]|_2 .
\end{align*}
Tribonacci automata for these sequences are given in \cite[\S 10.12]{Shallit:2022}.  These are sequences \seqnum{A276796}, \seqnum{A276797}, and 
\seqnum{A276798}$-1$ in the OEIS, respectively.

\section{Two sequences related to combinatorial games}

Our starting point is two sequences related to combinatorial games.
These are sequences \seqnum{A140100} and \seqnum{A140101} in the
OEIS, from which we quote the definition more-or-less verbatim:
\begin{definition}
Start with $X(0) = 0$, $Y(0)=0$, $X(1)=1$, $Y(1)=2$.
For $n > 1$, choose the least positive integers $Y(n) > X(n)$
such that neither $Y(n)$ nor $X(n)$ appear in 
$\{Y(k) \suchthat  1 \leq k < n \}$ or
$\{X(k) \suchthat 1 \leq k < n \}$ and such that $Y(n)-X(n)$
does not appear in $\{Y(k)-X(k) \suchthat 1 \leq k < n\}$
or $\{Y(k)+X(k) \suchthat 1 \leq k < n \}$.
\label{def}
\end{definition}
Table~\ref{tab2} gives the first few terms of these sequences.
\begin{table}[H]
\begin{center}
\begin{tabular}{c|cccccccccccccccc}
$n$ & 0 &  1& 2& 3& 4& 5& 6& 7& 8& 9&10&11&12&13&14&15\\
\hline
$X(n)$ & 0 & 1& 3& 4& 6& 7& 9&10&12&14&15&17&18&20&21&23\\
$Y(n)$ & 0 & 2& 5& 8&11&13&16&19&22&25&28&31&33&36&39&42
\end{tabular}
\end{center}
\caption{First few values of the sequences $(X(k))_{k \geq 0}$ and
$(Y(k))_{k \geq 0}$.}
\label{tab2}
\end{table}
These two sequences were apparently first studied by
Duch\^{e}ne and Rigo \cite[Corollary 3.6]{Duchene&Rigo:2008} in connection with
a combinatorial game, and they established the close connection between
them and the infinite Tribonacci word $\bf TR$.   Further properties
were given in \cite{Dekking&Shallit&Sloane:2020}.

Because of the already-established connection with $\bf TR$, one might
conjecture that the sequences ${\bf X} = (X(k))_{k \geq 0}$ and 
${\bf Y} = (Y(k))_{k \geq 0}$ are both
Tribonacci-synchronized.   
But how can this be verified?
There is no known method to take
an arbitrary recursive description of sequences, like the one given above in
Definition~\ref{def}, and turn it into automata computing the sequences.

We do so in two steps.  The first is to use some heuristics
to ``guess'' the automata for
$\bf X$ and $\bf Y$.   We do that using (essentially) the classical
Myhill-Nerode theorem, as described below for an arbitrary sequence
of natural numbers.

\subsection{Guessing an automaton}

Let $(x_n)_{n \geq 0}$ be a sequence of natural numbers that we suspect
is Tribonacci-synchronized.  The following heuristic
procedure guesses a synchronized
automaton for it.  Let $A = (\{0,1\} \times \{0,1\})^*$.
For two words $w,z \in \{0,1\}^*$ of equal length, we define
$w \times z$ to be the word in $A$
whose projection onto the first coordinate
is $w$ and second is $z$.

Define the language $L \subseteq A$ as follows:
$$ L = \{ w \times z \suchthat |w|=|z|,\ w,z \in \{0,1\}^*,\ \exists n\ 
	[w]_T = n,\ [z]_T = x_n \} .$$
Furthermore, define an equivalence relation on $A$ as follows:
$r \equiv_i s$ if and only if $rt \in L$ iff $st\in L$ for all
$t \in A$ with $|t| \leq i$.  Clearly this equivalence relation
partitions $A$ into finitely many equivalence classes.

Now do a breadth-first search on the elements of $A$ in increasing
order of length, where a word $r$ is marked as ``do not expand further''
if it is equivalent (under $\equiv_i$)
to a word that was previously examined.  In practice,
this is done by maintaining a queue of vertices to consider.
When the queue becomes empty, an automaton can be formed by letting the
states be the set of vertices, and a transition from $r$ to $s$
marked $a$ if $ra \equiv_i s$.  A state corresponding to $r$ is accepting
if $r \in L$.

One can then look at the resulting automata that are created for
increasing $i$.  If they stabilize (that is, the automata are the
same for two consecutive values of $i$), we can surmise that we have
found a synchronizing automaton.  The correctness of
the guessed automaton then needs to be verified rigorously
by some different approach.

\begin{remark}
In place of the language $L$ described above, one could also use
an alternative language $L'$, where one also insists
that neither the projections of $r$ nor $s$ contain three consecutive $1$'s.
It is not always clear which approach will result in smaller automata, and
either one can be used with {\tt Walnut}.  In fact, this is the one we 
actually used.
\end{remark}

When we carry out this procedure for the sequences $\bf X$ and
$\bf Y$ mentioned above, we easily find Tribonacci-synchronized
automata of $27$ and $30$ states, respectively.  In {\tt Walnut}
we call them {\tt xaut} and {\tt yaut}, and store them in the
{\tt Automaton Library} directory of {\tt Walnut}.
If the reader wishes to verify the computations in this paper,
the two automaton files can be downloaded from the author's
web page:\\
\centerline{\url{https://cs.uwaterloo.ca/~shallit/papers.html} \ .} \\
All the other {\tt Walnut} commands can simply be cut-and-pasted
into {\tt Walnut} in order to verify our claims.

\subsection{Verifying the automata}

Now that we have candidate automata, it remains to verify their correctness.
We can do this using mathematical induction.   We say that
a triple $(n,x,y)$ is {\it good\/} if all of the following conditions
hold:
\begin{enumerate}
\item $y > x$;
\item $x \not\in \{X(k) \suchthat 1 \leq k < n \}$
\item $y \not\in \{X(k) \suchthat 1 \leq k < n \}$
\item $x \not\in \{Y(k) \suchthat 1 \leq k < n \}$
\item $y \not\in \{Y(k) \suchthat 1 \leq k < n \}$
\item $y-x \not\in \{Y(k)-X(k) \suchthat 1 \leq k < n\}$
\item $y-x \not\in \{Y(k)+X(k) \suchthat 1 \leq k < n\}$
\end{enumerate}
To carry out the induction proof we must show three things:
\begin{enumerate}
\item The triple $(n, X(n), Y(n))$ is good for all $n\geq 1$;
\item If $(n, x,y)$ is good then $x \geq X(n)$;
\item If $(n,X(n),y)$ is good then $y \geq Y(n)$.
\end{enumerate}
The latter two conditions ensure that each value of $X(n)$ and $Y(n)$
chosen iteratively in Definition~\ref{def} is indeed the minimal
possible value among good candidates.

This verification can be carried out by the following {\tt Walnut} code.
To interpret it, you need to know the following basics of {\tt Walnut}
syntax:
\begin{itemize}
\item {\tt E} is the existential quantifier and {\tt A} is the universal
quantifier.
\item {\tt ?msd\_trib} tells {\tt Walnut} to represent integers in the
Tribonacci representation system.
\item {\tt \char'176} is logical NOT, {\tt \&} is logical AND, {\tt |} is logical
OR, {\tt =>} is logical implication.
\item {\tt def} defines an automaton accepting the values of the free
variables making the logical statement true.
\item {\tt eval} evaluates a logical statement with no free variables as
{\tt TRUE} or {\tt FALSE}.
\end{itemize}
Now we can create {\tt Walnut} predicates to verify the claims.
The code for {\tt good} asserts that its arguments $(n,x,y)$ represent
a good triple.   The three commands that follow it verify the three conditions.
\begin{verbatim}
def good "?msd_trib y>x &
   (~Ek k<n & $xaut(k,x)) &
   (~Ek k<n & $xaut(k,y)) &
   (~Ek k<n & $yaut(k,x)) &
   (~Ek k<n & $yaut(k,y)) &
   (~Ek,a,b k<n & $xaut(k,a) & $yaut(k,b) & y-x=b-a) &
   (~Ek,a,b k<n & $xaut(k,a) & $yaut(k,b) & y-x=b+a)":
eval check1 "?msd_trib An,x,y (n>=1 & $xaut(n,x) & $yaut(n,y)) => $good(n,x,y)":
eval check2 "?msd_trib An,x,y (n>=1 & $good(n,x,y)) => (Ez $xaut(n,z) & x>=z)":
eval check3 "?msd_trib An,x,y (n>=1 & $xaut(n,x) & $good(n,x,y)) =>
   (Ez $yaut(n,z) & y>=z)":
\end{verbatim}
and the last three commands all return {\tt TRUE}.

\section{Verifying a conjecture}

Now that we have rigorously-proven
automata for the sequences $\bf X$ and $\bf Y$, we
can use them to verify a conjecture of Robbert Fokkink,
made in a talk he gave on October 3 2022 \cite{Fokkink:2022}.
It also appears just before Section 2 in
the paper of Fokkink and Rust \cite{Fokkink&Rust:2022}.  (Notice
that they wrote $a_n$ for $X(n)$ and $b_n$ for $Y(n)$.)

\begin{conjecture}
For all $n \geq 1$ either $X(Y(n)) = X(n) + Y(n)$ 
or $X(Y(n)) = X(n) + Y(n) - 1$.
\end{conjecture}

\begin{proof}
We use the following {\tt Walnut} code:
\begin{verbatim}
eval fokkink "?msd_trib An,x,y,xy (n>=1 & $xaut(n,x) & $yaut(n,y)
   & $xaut(y,xy)) => (xy=x+y|xy=x+y-1)":
\end{verbatim}
and {\tt Walnut} returns {\tt TRUE}.
\end{proof}

We can also prove 
two conjectures of Julien Cassaigne, made
during the same talk:
\begin{conjecture}
\leavevmode
\begin{enumerate}
\item For all $n \geq 1$ either $X(Y(n))=X(n)+Y(n)$ or $Y(X(n))=X(n)+Y(n)$.
\item For all $n \geq 1$ we have $X(n) + Y(n) - Y(X(n)) \in \{0,1,2\}$.
\end{enumerate}
\end{conjecture}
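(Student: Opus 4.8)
The plan is to treat both parts of this conjecture exactly as the Fokkink conjecture was treated: translate each assertion into a first-order sentence over the Tribonacci numeration system and decide it with \texttt{Walnut}, using the synchronized automata \texttt{xaut} and \texttt{yaut} whose correctness was already established in the previous section. Since those automata define $X$ and $Y$ as functional relations, every statement about $X$, $Y$, and their compositions is first-order expressible, and so the whole argument is purely mechanical.

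The one encoding point worth spelling out is how to express the compositions $X(Y(n))$ and $Y(X(n))$. Because the automata are \emph{synchronized}, a composition is captured by chaining: I would introduce bound variables $x$, $y$, $xy$, $yx$ constrained by $\texttt{xaut}(n,x)$, $\texttt{yaut}(n,y)$, $\texttt{xaut}(y,xy)$, and $\texttt{yaut}(x,yx)$, so that $x = X(n)$, $y = Y(n)$, $xy = X(Y(n))$, and $yx = Y(X(n))$. For part (1) the sentence then asserts that for all $n \geq 1$ satisfying these constraints, $xy = x+y$ or $yx = x+y$; for part (2) it asserts $x+y = yx$ or $x+y = yx+1$ or $x+y = yx+2$. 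I would write this second condition additively, rather than as $X(n)+Y(n)-Y(X(n)) \in \{0,1,2\}$, precisely to sidestep the fact that \texttt{Walnut} works over $\mathbb{N}$, where the subtraction $X(n)+Y(n)-Y(X(n))$ is only meaningful once one knows $Y(X(n)) \le X(n)+Y(n)$.

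Running these two sentences through \texttt{Walnut} then returns \texttt{TRUE} in each case, which proves the conjecture. Because all the genuine content---namely verifying that \texttt{xaut} and \texttt{yaut} really do compute $X$ and $Y$---was discharged by \texttt{check1}, \texttt{check2}, and \texttt{check3} earlier, there is essentially no remaining mathematical obstacle here. The only things to be careful about are the correct chaining of the automata for the nested applications and the non-negativity issue just noted; once those are handled, the decision procedure does the rest.
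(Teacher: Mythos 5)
Your proposal is correct and matches the paper's proof essentially verbatim: the paper's \texttt{julien1} and \texttt{julien2} commands chain \texttt{xaut} and \texttt{yaut} exactly as you describe, quantifying over $x$, $y$, $xy$, $yx$ and checking the same disjunctions. The only (immaterial) difference is that the paper states part (2) using subtraction, as \texttt{x+y-yx=0|x+y-yx=1|x+y-yx=2}, rather than your additive rewriting.
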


\begin{proof}
We use {\tt Walnut} as follows:
\begin{verbatim}
eval julien1 "?msd_trib An,x,y,xy,yx ($xaut(n,x) & $yaut(n,y) & 
   $xaut(y,xy) & $yaut(x,yx)) => (xy=x+y|yx=x+y)":
eval julien2 "?msd_trib An,x,y,yx ($xaut(n,x) & $yaut(n,y) & $yaut(x,yx)) =>
   (x+y-yx=0|x+y-yx=1|x+y-yx=2)":
\end{verbatim}
and {\tt Walnut} returns {\tt TRUE} for both.
\end{proof}

Similarly, we can refute a conjecture of Dan Rust, made in
the same talk:
\begin{conjecture}
For all $n \geq 1$ we have
$X(Y(n)) + Y(X(n)) = 2X(n)+ 2Y(n) - 1$.
\end{conjecture}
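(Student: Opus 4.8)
The final statement is a conjecture of Dan Rust that the paper says it will \emph{refute}, so my goal is to disprove it rather than prove it. The plan is to follow exactly the same mechanical methodology already established in the preceding sections: since $\mathbf{X}$ and $\mathbf{Y}$ are now known to be Tribonacci-synchronized with verified automata {\tt xaut} and {\tt yaut}, the quantities $X(Y(n))$, $Y(X(n))$, $X(n)$, and $Y(n)$ are all accessible inside {\tt Walnut}. The claimed identity $X(Y(n)) + Y(X(n)) = 2X(n) + 2Y(n) - 1$ is a first-order statement over the Tribonacci numeration system, so {\tt Walnut} can decide its truth value outright.

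First I would introduce free variables for each composite value and tie them to the automata: for a given $n$, let $x$, $y$ satisfy {\tt \$xaut(n,x)} and {\tt \$yaut(n,y)}, let $xy$ satisfy {\tt \$xaut(y,xy)} (so $xy = X(Y(n))$), and let $yx$ satisfy {\tt \$yaut(x,yx)} (so $yx = Y(X(n))$). The universally quantified claim is then {\tt xy+yx = 2*x+2*y-1} for all $n \geq 1$. Running {\tt eval} on this statement will return {\tt FALSE}, which refutes the conjecture. To make the refutation concrete and convincing rather than merely asserting a {\tt FALSE} verdict, the natural second step is to extract an explicit counterexample: I would replace the universal quantifier with an existential search for a witness $n$ where the identity fails, using {\tt def} to build an automaton accepting the offending values of $n$ and then reading off the smallest one.

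The only genuine subtlety — and the step I expect to be the main obstacle — is getting the arithmetic encoding right, since subtraction in {\tt Walnut} over a non-standard numeration system requires that all intermediate quantities remain non-negative natural numbers. The safest formulation is to clear the subtraction by rewriting $X(Y(n)) + Y(X(n)) = 2X(n) + 2Y(n) - 1$ as {\tt xy+yx+1 = 2*x+2*y}, which avoids any negative intermediate value and makes the predicate unambiguous. With that reformulation in place, the remainder is entirely routine: {\tt Walnut} decides the sentence, reports {\tt FALSE}, and the extracted witness exhibits a specific $n$ for which the asserted linear relation does not hold, thereby disproving the conjecture.
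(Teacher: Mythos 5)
Your proposal is correct and matches the paper's disproof essentially verbatim: the paper likewise defines a predicate \texttt{rust(n)} tying $x$, $y$, $xy$, $yx$ to \texttt{xaut} and \texttt{yaut}, evaluates the universal claim (obtaining \texttt{FALSE}), and exhibits the explicit counterexample $n=20$ (additionally confirming via \texttt{Am En (n>m \& \char'176\$rust(n))} that counterexamples are infinite in number). Your worry about the subtraction is unnecessary---\texttt{Walnut} handles expressions like \texttt{2*x+2*y-1} directly over the naturals, and the paper uses that form without reformulation---but your rewriting as \texttt{xy+yx+1 = 2*x+2*y} is an equivalent and harmless variant.
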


\noindent{\it Disproof.}
We can {\it disprove} the conjecture as follows:
\begin{verbatim}
def rust "?msd_trib Ex,y,xy,yx $xaut(n,x) & $yaut(n,y) & $xaut(y,xy)
   & $yaut(x,yx) & xy+yx=2*x+2*y-1":
eval testrust1 "?msd_trib An (n>=1) => $rust(n)":
eval testrust2 "?msd_trib $rust(20)":
eval testrust3 "?msd_trib Am En (n>m & ~$rust(n))":
\end{verbatim}
Here {\tt Walnut} returns, respectively, {\tt FALSE}, {\tt FALSE}, and
{\tt TRUE}.  This shows that there are infinitely many counterexamples
to Rust's conjecture, and one of them is $n = 20$.   
\hfill $\qed$

\section{Other assorted properties}

Let's prove that $\bf X$ and $\bf Y$ are complementary sequences.
\begin{theorem}
The sets $\{ X(n) \suchthat n \geq 1 \}$ and
$\{ Y(n) \suchthat n \geq 1 \}$ form a disjoint partition of $\Enn
= \{ 1,2,3, \ldots \}$.
\end{theorem}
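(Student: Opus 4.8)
The plan is to reduce the assertion to two purely first-order statements about the synchronized sequences $\bf X$ and $\bf Y$ and then discharge each mechanically with {\tt Walnut}, relying on the automata {\tt xaut} and {\tt yaut} whose correctness was already established in the previous section. Saying that the two ranges form a disjoint partition of $\{1,2,3,\ldots\}$ amounts to exactly two claims: first, \emph{disjointness}, that no positive integer is simultaneously of the form $X(m)$ and $Y(n)$ with $m,n \geq 1$; and second, \emph{covering}, that every positive integer equals $X(m)$ or $Y(n)$ for some index $\geq 1$. Because {\tt xaut} and {\tt yaut} are synchronized, each of these quantified conditions translates directly into a sentence with no free variables that {\tt Walnut} can decide.

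Concretely, first I would verify disjointness with a command of the form
\begin{verbatim}
eval disjoint "?msd_trib ~Ep,m,n (m>=1 & n>=1 & $xaut(m,p) & $yaut(n,p))":
\end{verbatim}
expecting the output {\tt TRUE}. Then I would verify covering with
\begin{verbatim}
eval cover "?msd_trib Ap (p>=1) =>
   ((Em m>=1 & $xaut(m,p)) | (En n>=1 & $yaut(n,p)))":
\end{verbatim}
again expecting {\tt TRUE}. Together these two outputs establish that the two ranges are disjoint and jointly exhaustive, which is precisely the claimed partition.

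The point worth stressing is where the real difficulty lies. Once the synchronized automata are in hand, the partition property is essentially immediate, since the functional relationships ``$x = X(n)$'' and ``$y = Y(n)$'' are encoded in {\tt xaut} and {\tt yaut}, and the quantifier structure above is handled automatically by the decision procedure. Thus the main obstacle is not this theorem at all but the earlier step of guessing and then rigorously verifying {\tt xaut} and {\tt yaut} via the inductive {\tt check1}--{\tt check3} argument; the present statement is then a routine corollary of that work. The only mild care needed is to restrict the quantifiers to indices $\geq 1$, so that the degenerate initial values $X(0)=Y(0)=0$ are excluded and the partition is genuinely of the positive integers.
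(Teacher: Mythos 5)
Your proposal is correct and matches the paper's proof in essentially every respect: the paper's single {\tt eval thm6} command likewise splits the claim into a covering clause and a disjointness clause over the verified automata {\tt xaut} and {\tt yaut}, differing from yours only in that it omits the index restrictions $m,n\geq 1$ (harmless, since $X(0)=Y(0)=0$ cannot produce a value $\geq 1$). Your closing observation that the real work lies in verifying {\tt xaut} and {\tt yaut}, not in this theorem, is also exactly the paper's perspective.
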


\begin{proof}
We use the following {\tt Walnut} code.
\begin{verbatim}
eval thm6 "?msd_trib (An (n>=1) => ((Ey $xaut(y,n))|(Ey $yaut(y,n))))
   & (An (n>=1) => (~Ey,z $xaut(y,n) & $yaut(z,n)))":
\end{verbatim}
and {\tt Walnut} returns {\tt TRUE}.
\end{proof}

Similarly, let's prove that $(Y(n)-X(n))_{n \geq 1}$ and
$(Y(n)+X(n))_{n \geq 1}$ are complementary sequences.
\begin{theorem}
The sets $\{ Y(n)-X(n) \suchthat n \geq 1 \}$ and
$\{ Y(n)+X(n) \suchthat n \geq 1 \}$ form a disjoint partition of $\Enn
= \{ 1,2,3, \ldots \}$.
\end{theorem}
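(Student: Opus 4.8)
The plan is to proceed exactly as in the proof of the preceding theorem, reducing the two set-theoretic requirements---that the two sets cover $\Enn$, and that they are disjoint---to a single first-order statement over the Tribonacci numeration system, and then to discharge it mechanically in {\tt Walnut} using the already-certified automata {\tt xaut} and {\tt yaut}.

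First I would build two auxiliary predicates. For membership in the difference set I would define a predicate {\tt isdiff} with a single free variable $m$ asserting that $\exists n,x,y$ with $n \geq 1$, ${\tt xaut}(n,x)$, ${\tt yaut}(n,y)$, and $m + x = y$. Writing the relation as $m+x=y$ rather than $m = y-x$ keeps every quantity in $\Enn$ and sidesteps any awkwardness with natural-number subtraction; this is legitimate because $Y(n) > X(n)$ for all $n \geq 1$, which is exactly clause (1) of the notion of a \emph{good} triple. Analogously, a predicate {\tt issum} with free variable $m$ would assert that $\exists n,x,y$ with $n \geq 1$, ${\tt xaut}(n,x)$, ${\tt yaut}(n,y)$, and $m = x+y$.

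With these in hand, the whole theorem is captured by the conjunction of a covering clause, $\forall m\ (m \geq 1) \Rightarrow ({\tt isdiff}(m) \vee {\tt issum}(m))$, and a disjointness clause, $\forall m\ \neg({\tt isdiff}(m) \wedge {\tt issum}(m))$, directly mirroring the two conjuncts of {\tt thm6}. I would feed this single {\tt eval} statement to {\tt Walnut} and expect it to return {\tt TRUE}.

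As elsewhere in this paper, there is no genuinely hard mathematical step: once {\tt xaut} and {\tt yaut} were certified correct in the previous section, the correctness of this argument follows automatically. The only practical obstacle is the usual one for the method---the intermediate automata produced while composing {\tt xaut}, {\tt yaut}, and the arithmetic constraints $m+x=y$ and $m=x+y$ can grow large, so the point to watch is that determinization and minimization stay within {\tt Walnut}'s resource limits. Should they not, one could instead guess-and-verify dedicated synchronized automata for the sequences $(Y(n)-X(n))_{n\geq 1}$ and $(Y(n)+X(n))_{n\geq 1}$ first, exactly as was done for $\bf X$ and $\bf Y$, and then run the same short covering and disjointness checks against those simpler automata.
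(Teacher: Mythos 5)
Your proposal is correct and takes essentially the same approach as the paper: the paper likewise defines two {\tt Walnut} predicates {\tt diff} and {\tt sum} (via {\tt t=y-x} and {\tt t=x+y} over {\tt xaut} and {\tt yaut}) and then verifies the same covering and disjointness clauses in a single {\tt eval}, which returns {\tt TRUE}. Your only deviations---folding the existential quantifier over $n$ into the membership predicates and writing $m+x=y$ instead of $t=y-x$---are immaterial.
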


\begin{proof}
We use the following {\tt Walnut} code:
\begin{verbatim}
def diff "?msd_trib Ex,y $xaut(n,x) & $yaut(n,y) & t=y-x":
def sum "?msd_trib Ex,y $xaut(n,x) & $yaut(n,y) & t=x+y":
eval thm7 "?msd_trib (An (n>=1) => ((Ey $diff(y,n))|(Ey $sum(y,n))))
   & (An (n>=1) => (~Ey,z $diff(y,n) & $sum(z,n)))":
\end{verbatim}
And {\tt Walnut} returns {\tt TRUE}.
\end{proof}

Define the sequence $a(n)$ (resp., $b(n)$, $c(n)$)
to be one more than the position of the $n$'th occurrence of the
symbol $0$ (resp., $1$, $2$) in $\bf TR$.
This is sequence \seqnum{A003144} (resp., \seqnum{A003145}, \seqnum{A003146})
in the OEIS.  Here the ``first occurrence''
means $n = 1$.   

\begin{theorem}
For $n \geq 1$ we have $Y(n) = a(n)+n$.
\end{theorem}

\begin{proof}
We use the following {\tt Walnut} code, where {\tt triba} is
a Tribonacci-synchronized automaton for $a(n)$.
\begin{verbatim}
reg shift {0,1} {0,1} "([0,0]|[0,1][1,1]*[1,0])*":
def triba "?msd_trib (s=0 & n=0) | Ex $shift(n-1,x) & s=x+1":
eval thm8 "?msd_trib An,x,y (n>=1 & $triba(n,x) & $yaut(n,y)) => y=x+n":
\end{verbatim}
And {\tt Walnut} returns {\tt TRUE}.  Here the definitions of
{\tt shift} and {\tt triba} come from \cite{Shallit:2022}.
\end{proof}

\begin{theorem}
For all $n \geq 0$ we have $X(n) = D_{n-1} + n$, where
$D_n$ is the sequence defined in Section~\ref{aut-sec}.
\end{theorem}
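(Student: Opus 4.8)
The plan is to follow the same mechanical strategy used in the preceding theorems: reduce the asserted identity to a first-order statement over the Tribonacci numeration system and let \texttt{Walnut} decide it. Since $\mathbf{X}$ has already been shown to be Tribonacci-synchronized via the verified automaton \texttt{xaut}, and since $(D_n)_{n \geq 0}$ is Tribonacci-synchronized by \cite[\S 10.12]{Shallit:2022}, both sides of the claimed equation $X(n) = D_{n-1}+n$ are synchronized, so the equality is expressible as a decidable predicate.

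First I would load the synchronized automaton for $(D_n)_{n \geq 0}$ into \texttt{Walnut}, taking it directly from the reference above; call it \texttt{dee}. Next I would form the predicate that, for a given $n \geq 1$, asserts $X(n) = D_{n-1}+n$, along the lines of
\begin{verbatim}
eval thm9 "?msd_trib An,x,d (n>=1 & $xaut(n,x) & $dee(n-1,d)) => x=d+n":
\end{verbatim}
The universally quantified variables $x$ and $d$ are pinned down uniquely by the synchronized automata to equal $X(n)$ and $D_{n-1}$, respectively, so a \texttt{TRUE} answer certifies the identity for every $n \geq 1$. For $n \geq 1$ the argument $n-1$ is a legitimate non-negative Tribonacci input, so \texttt{\$dee(n-1,d)} is well-formed and the guard $n \geq 1$ makes the $n=0$ instance vacuous inside this command.

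The only genuine wrinkle is the index shift $D_{n-1}$ together with the boundary case $n=0$. The definition gives $X(0)=0$, and reading $D_{-1}$ as the number of $0$'s in the empty prefix (namely $0$) makes the formula $X(0)=D_{-1}+0$ hold; I would therefore dispatch $n=0$ as a trivial base case checked by hand (or by a one-line \texttt{eval} on $X(0)$), and rely on the command above for all $n \geq 1$. I expect essentially no obstacle beyond confirming that the imported automaton \texttt{dee} really computes $D_n = |\mathbf{TR}[0..n-1]|_0$ with the indexing convention used here; once that is settled, the decidability of the synchronized predicate does all the work. Should the off-by-one in $D_{n-1}$ cause friction with the automaton's native indexing, I would instead synchronize the shifted sequence $D_{n-1}$ directly — mirroring the \texttt{shift}/\texttt{triba} construction used in the previous theorem — and run the analogous check.
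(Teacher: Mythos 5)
Your proposal is correct and follows essentially the same route as the paper: the paper likewise obtains a Tribonacci-synchronized automaton for $D_n$ (called \texttt{tribd}, defined in \texttt{Walnut} from \texttt{triba} following \cite[\S 10.12]{Shallit:2022}) and then runs an \texttt{eval} command logically identical to yours, namely \texttt{(n>=1 \& \$tribd(n-1,x) \& \$xaut(n,y)) => x+n=y}. Your explicit handling of the $n=0$ boundary case is a small point of extra care that the paper's own proof leaves implicit.
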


\begin{proof}
We use the following {\tt Walnut} code, where
{\tt tribd} is a Tribonacci-synchronized automaton for
$D_n$.
\begin{verbatim}
def tribd "?msd_trib Et,u $triba(s,t) & $triba(s+1,u) & t<=n & n<u":
eval thm9 "?msd_trib An,x,y (n>=1 & $tribd(n-1,x) & $xaut(n,y)) =>
   x+n=y":
\end{verbatim}
and {\tt Walnut} returns {\tt TRUE}.  Here the definition of
{\tt tribd} comes from \cite{Shallit:2022}.
\end{proof}

We can also find some relations between $\bf X$ and $\bf Y$ and
the sequences $a,b,c$ defined above.
\begin{theorem}
We have
\begin{itemize}
\item[(a)] $X(n) = b(n)-a(n)$ for $n \geq 0$;
\item[(b)] $Y(n) = c(n)-b(n)$ for $n \geq 0$.
\end{itemize}
\end{theorem}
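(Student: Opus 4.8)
The plan is to follow the mechanical strategy used throughout this paper: express each of the two identities as a first-order sentence over the Tribonacci numeration system and let \texttt{Walnut} decide it. We already have rigorously-verified synchronized automata \texttt{xaut} and \texttt{yaut} for $\mathbf{X}$ and $\mathbf{Y}$, together with the automaton \texttt{triba} for $a(n)$ introduced earlier in the proof of the identity $Y(n)=a(n)+n$. What remains before the verification can proceed is to build synchronized automata \texttt{tribb} and \texttt{tribc} for the position sequences $b(n)$ and $c(n)$.

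First I would construct \texttt{tribb} and \texttt{tribc}. These can be produced in the same spirit as \texttt{triba}: either by applying the guess-and-verify heuristic of Section~3 to the numerical data for $b$ and $c$, or more directly through the prefix-counting synchronized sequences $E_n$ and $F_n$ of Section~\ref{aut-sec}. Indeed, $b(n)$ is the unique $m+1$ for which $\mathbf{TR}[m]=1$ and $E_m=n-1$, and likewise $c(n)$ is the unique $m+1$ for which $\mathbf{TR}[m]=2$ and $F_m=n-1$; each description translates directly into a \texttt{Walnut} predicate whose correctness is itself decidable. As with \texttt{triba}, a small special case can be appended so that $b(0)=c(0)=0$, matching the convention $a(0)=0$ and covering the boundary value $n=0$ claimed in the statement.

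Once \texttt{tribb} and \texttt{tribc} are available, the verification is immediate. Writing the subtractions additively to avoid any issue with \texttt{Walnut}'s truncated subtraction, I would issue
\begin{verbatim}
eval thm_a "?msd_trib An,x,aa,bb ($xaut(n,x) & $triba(n,aa) & $tribb(n,bb))
   => x+aa=bb":
eval thm_b "?msd_trib An,y,bb,cc ($yaut(n,y) & $tribb(n,bb) & $tribc(n,cc))
   => y+bb=cc":
\end{verbatim}
and expect \texttt{Walnut} to return \texttt{TRUE} for both, establishing (a) and (b) respectively.

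The main obstacle is thus not the logical verification, which becomes routine once the automata exist, but the construction and certification of \texttt{tribb} and \texttt{tribc}, including the correct handling of the indexing convention at $n=0$. After that, each of the two identities reduces to a single decidable first-order sentence.
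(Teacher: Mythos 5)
Your proposal is correct and follows the paper's overall strategy: build Tribonacci-synchronized automata for $b(n)$ and $c(n)$, then hand both identities to \texttt{Walnut} as single first-order sentences (your \texttt{eval} commands are, apart from writing the subtractions additively, the same as the paper's \texttt{parta} and \texttt{partb}). The one genuine difference is how the auxiliary automata are obtained. The paper defines \texttt{tribb} and \texttt{tribc} by composing the \texttt{shift} automaton already used for \texttt{triba}: $b(n)$ comes from two applications of \texttt{shift} plus an offset of $2$, and $c(n)$ from three applications plus an offset of $4$, with the same $(s=0 \wedge n=0)$ special case you anticipated; these definitions are imported from the cited book rather than re-derived. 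Your alternative---characterizing $b(n)$ as the unique $m+1$ with $\mathbf{TR}[m]=1$ and $E_m=n-1$, and $c(n)$ analogously via $F_m$---is equally valid and arguably more self-contained, since it leans only on the DFAO for $\mathbf{TR}$ and the synchronized counting sequences $E_n$, $F_n$ introduced in Section 2 of the paper, and it is correct essentially by the definitions of $b$, $c$, $E$, and $F$. Either construction produces a synchronized automaton that makes the final verification routine; note only that your fallback of guess-and-verify for $b$ and $c$ would require its own certification step (as the paper had to do for \texttt{xaut} and \texttt{yaut}), so the counting characterization is the preferable of your two suggestions.
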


\begin{proof}
We use the {\tt Walnut} commands
\begin{verbatim}
def tribb "?msd_trib (s=0&n=0) | Ex,y $shift(n-1,x) &
   $shift(x,y) & s=y+2":
def tribc "?msd_trib (s=0&n=0) | Ex,y,z $shift(n-1,x) &
   $shift(x,y) & $shift(y,z) & s=z+4":
eval parta "?msd_trib An,a,b,x ($triba(n,a) & $tribb(n,b) & $xaut(n,x)) 
   => x=b-a":
eval partb "?msd_trib An,b,c,y ($tribb(n,b) & $tribc(n,c) & $yaut(n,y)) 
   => y=c-b":
\end{verbatim}
and {\tt Walnut} returns {\tt TRUE} for both.
\end{proof}

Let $(\beta(n))_{n \geq 0}$ be the characteristic sequence of $\bf X$, that is
$\gamma(n) = 1$ if there exists $i$ such that $n= X(i)$, and
similarly let $(\gamma(n))_{n \geq 0}$ 
be the characteristic sequence of $\bf Y$.
Here $\beta$ is sequence \seqnum{A305385} and 
$\gamma$ is sequence \seqnum{A305386} in the OEIS.
\begin{theorem}
The sequences $\beta$ and $\gamma$ are Tribonacci-automatic.
\end{theorem}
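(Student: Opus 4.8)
The plan is to exploit the general principle, used implicitly throughout this paper, that the characteristic sequence of the \emph{range} of a Tribonacci-synchronized sequence is itself Tribonacci-automatic. Recall that $\beta(n)=1$ precisely when $n$ lies in the set $\{ X(i) \}$ of values taken by $\bf X$, and $\gamma(n)=1$ precisely when $n$ lies in $\{ Y(i) \}$. Since $\bf X$ is Tribonacci-synchronized, the binary relation ``$n = X(i)$'' is recognized by the DFA \texttt{xaut}, reading $(i)_T$ and $(n)_T$ in parallel; existentially quantifying away the index coordinate $i$ then yields a DFA that, on input $(n)_T$ alone, accepts exactly the elements of the range of $\bf X$. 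Thus $\{ X(i) \}$ is a Tribonacci-recognizable set, and likewise $\{ Y(i) \}$.

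From here the argument has two elementary steps. First, recognizability is preserved under projection (existential quantification), so the two range sets are Tribonacci-recognizable. Second, the characteristic function of any Tribonacci-recognizable set is Tribonacci-automatic: one simply reinterprets the recognizing DFA as a DFAO, emitting output $1$ on its accepting states and $0$ on the remaining states. Applying this to the two range sets gives that $\beta$ and $\gamma$ are Tribonacci-automatic. Concretely, in {\tt Walnut} each is obtained in a single line:
\begin{verbatim}
def beta "?msd_trib Ei $xaut(i,n)":
def gamma "?msd_trib Ei $yaut(i,n)":
\end{verbatim}
The {\tt def} command performs exactly the existential projection described above and returns the recognizing automaton, which is the desired Tribonacci DFAO under the usual convention that accepting states output $1$. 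If one wishes to restrict to the indexing $\Enn = \{1,2,3,\dots\}$ used elsewhere, one replaces {\tt Ei} by {\tt Ei i>=1}.

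There is no genuinely hard step here: the whole result rests on the already-verified synchronized automata \texttt{xaut} and \texttt{yaut} together with the closure of recognizable sets under projection. The only point demanding a little care is the bookkeeping of representations, namely ensuring that the value at $n=0$ and the handling of leading zeros agree with the intended definition of the characteristic sequence; but this is managed uniformly by {\tt Walnut}'s Tribonacci representation conventions, so no separate argument is required.
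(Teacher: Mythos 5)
Your proposal is correct and follows essentially the same route as the paper: existentially project the index coordinate out of the synchronized automata \texttt{xaut} and \texttt{yaut} (via the same \texttt{def beta}/\texttt{def gamma} commands) and then reinterpret the resulting DFAs as DFAOs with output $1$ on accepting states. The paper merely makes that last reinterpretation explicit with {\tt Walnut}'s \texttt{combine} command, which is exactly the step you describe in words.
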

\begin{proof}
We can easily
construct Tribonacci DFAO's generating these sequences, as follows:
\begin{verbatim}
def beta "?msd_trib Ei $xaut(i,n)":
combine BETA beta:
def gamma "?msd_trib Ei $yaut(i,n)":
combine GAMMA gamma:
\end{verbatim}
The automaton for $\beta$ is depicted in Figure~\ref{fig2}.
Here the notation $q/i$ in a state indicates that the state
is numbered $q$ and has output $i$.  To get the automaton for
$\gamma$, simply change the output of every state (except state $0$)
from $0$ to $1$ and vice versa. 
\end{proof}
\begin{figure}[H]
\begin{center}
\includegraphics[width=6.6in]{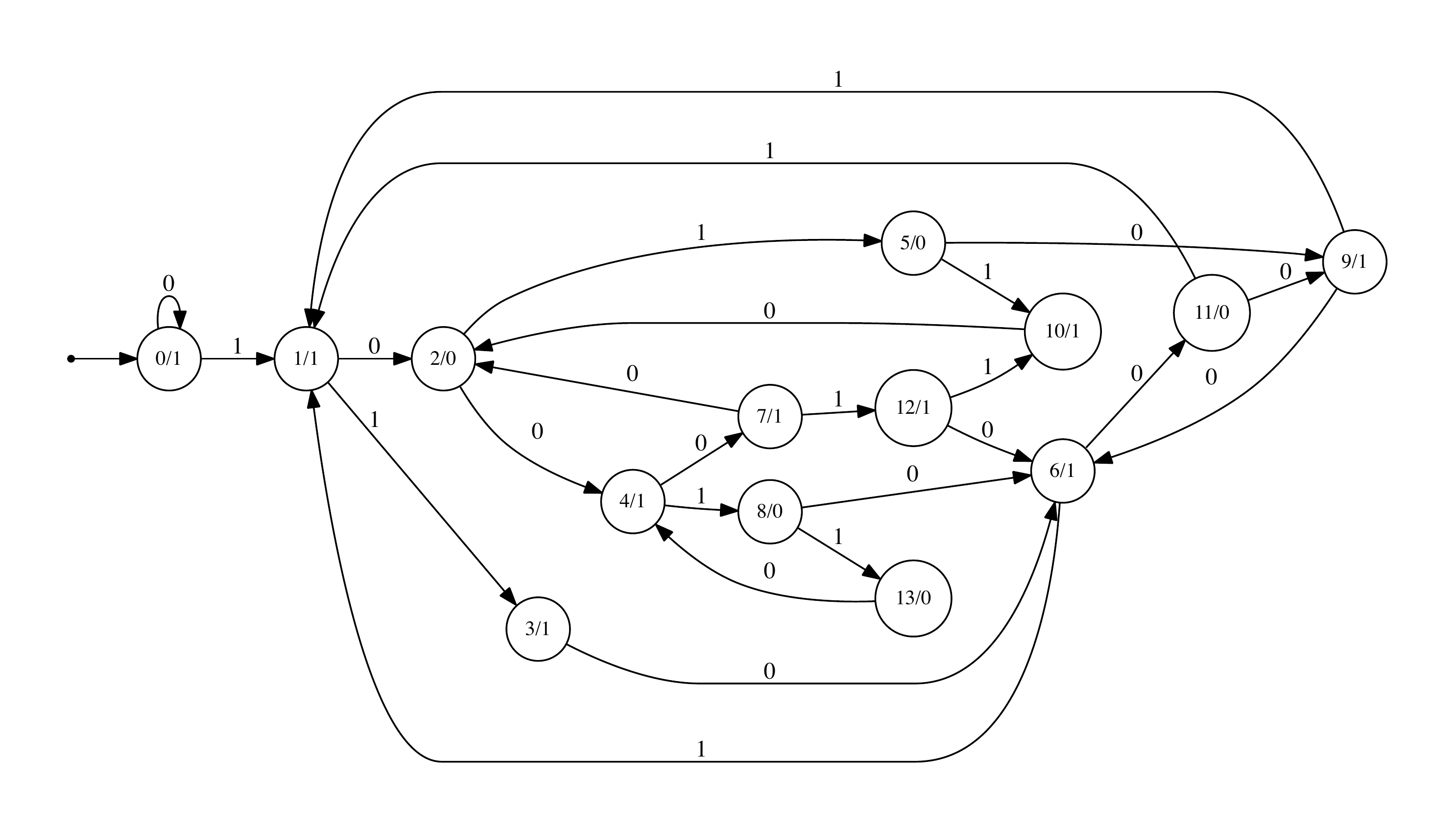}
\end{center}
\caption{Tribonacci DFAO computing $\beta$.}
\label{fig2}
\end{figure}

\section{Final remarks}

Nothing we have said in this note crucially depends on Tribonacci representation.
Exactly the same approach often works for sequences defined in
terms of Fibonacci representation, base-$k$
representation, and some other kinds of representations such as
Ostrowski representation.  See \cite{Shallit:2022} for a discussion
of what kinds of representations work.

{\tt Walnut} is available for free download at\\
\centerline{\url{https://cs.uwaterloo.ca/~shallit/walnut.html} \ .}\\

{\tt Walnut} has a number of limitations, and certainly cannot be used to
automatically prove all properties of all sequences, but in its small
domain it can be very effective, as we hope to have shown here.


\begin{thebibliography}{10}

\bibitem{Allouche&Shallit:2003}
J.-P. Allouche and J.~Shallit, {\em Automatic Sequences: Theory, Applications,
  Generalizations}, Cambridge University Press, 2003.

\bibitem{Barcucci&Belanger&Brlek:2004}
E.~Barcucci, L.~{B\'elanger}, and S.~Brlek, On {Tribonacci} sequences, {\em
  Fibonacci Quart.} {\bf 42} (2004), 314--319.

\bibitem{Carlitz&Scoville&Hoggatt:1972d}
L.~Carlitz, R.~Scoville, and V.~E. Hoggatt, Jr., Fibonacci representations of
  higher order, {\em Fibonacci Quart.} {\bf 10} (1972), 43--69,94.

\bibitem{Dekking&Shallit&Sloane:2020}
F.~M. Dekking, J.~Shallit, and N.~J.~A. Sloane, Queens in exile: non-attacking
  queens on infinite chess boards, {\em Electronic J. Combinatorics} {\bf 27}
  (2020), \#P1.52 (electronic).

\bibitem{Duchene&Rigo:2008}
E.~{Duch\^ene} and M.~Rigo, A morphic approach to combinatorial games: the
  {Tribonacci} case, {\em RAIRO Inform. Th\'eor. App.} {\bf 42} (2008),
  375--393.

\bibitem{Feinberg:1963}
M.~Feinberg, {Fibonacci--Tribonacci}, {\em Fibonacci Quart.} {\bf 1} (1963),
  71--74.

\bibitem{Fokkink:2022}
R.~Fokkink, A few words on games.
\newblock Talk for the One World Seminar on Combinatorics on Words, October 3
  2022. Available at
  \url{http://www.i2m.univ-amu.fr/wiki/Combinatorics-on-Words-seminar/_media/seminar2022:20221003fokking.pdf}.

\bibitem{Fokkink&Rust:2022}
R.~Fokkink and D.~Rust, Queen reflections---a modification of {Wythoff Nim}.
\newblock {\it Int. J. Game Theory}, 2022, to appear.

\bibitem{Huang&Wen:2017}
Y.~Huang and Z.~Wen, The numbers of repeated palindromes in the {Fibonacci} and
  {Tribonacci} words, {\em Disc. Appl. Math.} {\bf 230} (2017), 78--90.

\bibitem{Lejeune&Rigo&Rosenfeld:2020}
M.~Lejeune, M.~Rigo, and M.~Rosenfeld, Templates for the $k$-binomial
  complexity of the {Tribonacci} word, {\em Adv. in Appl. Math.} {\bf 112}
  (2020), 101947.

\bibitem{Mousavi:2016}
H.~Mousavi, Automatic theorem proving in {{\tt Walnut}}.
\newblock Arxiv preprint arXiv:1603.06017 [cs.FL], available at
  \url{http://arxiv.org/abs/1603.06017}, 2016.

\bibitem{Mousavi&Shallit:2015}
H.~Mousavi and J.~Shallit, Mechanical proofs of properties of the {Tribonacci}
  word.
\newblock In F.~Manea and D.~Nowotka, editors, {\em Proc. WORDS 2015}, Vol.
  9304 of {\em Lecture Notes in Computer Science}, pp.  1--21. Springer-Verlag,
  2015.

\bibitem{Richomme&Saari&Zamboni:2010}
G.~Richomme, K.~Saari, and L.~Q. Zamboni, Balance and {Abelian} complexity of
  the {Tribonacci} word, {\em Adv. in Appl. Math.} {\bf 45} (2010), 212--231.

\bibitem{Scott&Delaney&Hoggatt:1977}
A.~Scott, T.~Delaney, and V.~E. Hoggatt, Jr., The {Tribonacci} sequence, {\em
  Fibonacci Quart.} {\bf 15} (1977), 193--200.

\bibitem{Shallit:2021h}
J.~Shallit, Synchronized sequences.
\newblock In T.~Lecroq and S.~Puzynina, editors, {\em WORDS 2021}, Vol. 12847
  of {\em Lecture Notes in Computer Science}, pp.  1--19. Springer-Verlag,
  2021.

\bibitem{Shallit:2022}
J.~Shallit, {\em The Logical Approach to Automatic Sequences: Exploring
  Combinatorics on Words with {\tt Walnut}}, Cambridge University Press, 2022.
\newblock In press.

\bibitem{Sloane:2022}
N.~J.~A. Sloane et~al., The on-line encyclopedia of integer sequences, 2022.
\newblock Available at \url{https://oeis.org}.

\bibitem{Tan&Wen:2007}
B.~Tan and Z.-Y. Wen, Some properties of the {Tribonacci} sequence, {\em
  European J. Combinatorics} {\bf 28} (2007), 1703--1719.

\end{thebibliography}
\end{document}